\documentclass[12pt,oneside]{amsart}
\usepackage{amssymb, amsmath, amsthm}

\usepackage{epsfig}
\usepackage{epstopdf}
\usepackage{graphicx}
\usepackage{color}
\usepackage{enumerate}
\usepackage{amsrefs}
\usepackage{array}

\usepackage[normalem]{ulem}

\usepackage[normalem]{ulem}

\usepackage{pinlabel}

\theoremstyle{plain}
\newtheorem{theorem}{Theorem}[section]

\newtheorem*{theorem*}{Theorem}
\newtheorem*{theorem-DisjtSS}{Theorem \ref{Thm: Disjt SS}}
\newtheorem*{theorem-EssentialTorus}{Theorem \ref{Thm: Essential Torus}}
\newtheorem*{cor-ScharlWu}{Corollary \ref{Cor-ScharlWu}}
\newtheorem*{corollary-MSC 2}{Corollary \ref{Cor: MSC 2}}
\newtheorem*{theorem-Main A}{Theorem \ref{Thm: Main A}}
\newtheorem*{theorem-Main B}{Theorem \ref{Thm: Main Thm B}}
\newtheorem*{Cor-Unknotting}{Theorem \ref{Cor: Prime Unknotting 1}}
\newtheorem*{Cor-genusbandsum}{Theorem \ref{Thm: Genus superadd}}
\newtheorem*{Cor-bandsumscc}{Corollary \ref{Cor: Band Sums CC}}

\newtheorem{definition}[theorem]{Definition}

\theoremstyle{definition}







      \makeatletter
      \def\@setcopyright{}
      \def\serieslogo@{}
      \makeatother

\begin{document}

   \title[Three-dimensional rep-tiles]{Three-dimensional rep-tiles}
   \author{Ryan Blair, Zoe Marley and Ilianna Richards}

\begin{abstract}
A 3D rep-tile is a compact 3-manifold $X$ in $\mathbb{R}^3$ that can be decomposed into finitely many pieces, each of which are similar to $X$, and all of which are congruent to each other. In this paper we classify all 3D rep-tiles up to homeomorphism. In particular, we show that a 3-manifold is homeomorphic to a 3D rep-tile if and only if it is the exterior of a connected graph in $S^3$. 
\end{abstract}

\maketitle
\date{\today}

\section{Introduction}

A set $X \subset \mathbb{R}^n$ with non-empty interior is an $n$-dimensional $k$-index rep-tile if there are sets $X_1,X_2,..., X_k$ with disjoint interiors and with $X= \cup_{i=1}^{k} X_i$ that are mutually congruent and similar to $X$. Rep-tiles have been well-studied since 1963, when they were introduced by Gardner and Golomb \cite{Gardner63}, \cite{Golomb64}. Much of the study of rep-tiles has focused on 2-dimensional rep-tiles from the perspective of fractal geometry. There are robust methods for constructing rep-tiles \cite{B91}. However, these methods usually result in rep-tiles that are not piecewise linear manifolds. Various authors have explored the basic topological properties of rep-tiles. For example, a rep-tile $X$ has a \emph{hole} if the complement of the closure of some component of the interior of $X$ has a bounded component. John Conway asked if there exists a  $2$-dimensional rep-tile with a hole, and Gr{\"u}mbaum answered this question in the affirmative by providing a $36$-index rep-tile example \cite{Croft91} (also see \cite{JN05}). The topology of $2$-dimensional rep-tiles is limited. In particular, Bandt and Wang \cite{BW01} and Luo et al. \cite{LRT} proved that if the interior of a $2$-dimensional rep-tile is connected, then the rep-tile is a topological disk. 

In this paper we apply $3$-manifold techniques to study the topology of 3-dimensional rep-tiles. To this end, we restrict to \emph{3D rep-tiles},  $3$-dimensional $k$-index rep-tiles that are piecewise linear embeddings of compact connected 3-manifolds in $\mathbb{R}^3$ with $k>1$. Examples of 3D rep-tiles that are homeomorphic to the 3-ball are prevalent. For example, a cube is an 8-index 3D rep-tile homeomorphic to a 3-ball, see Figure \ref{IdempVSRep}. Additionally, various authors have investigated which tetrahedra are 3D rep-tiles  \cite{LJ94}, \cite{MS11}, \cite{KP17}, \cite{Haverkort2018}. However, examples homeomorphic to other 3-manifolds have been more challenging to generate. In 1998, Goodman-Strauss asked if there exists a 3D rep-tile with a hole. This was answered by Van Ophysen, who generated an example homeomorphic to a solid torus \cite{Prob19}. See Figure \ref{G1Examp} for an example of a 3D rep-tile homeomorphic to a solid torus. Furthermore, a description of an infinite family of 3D rep-tiles homeomorphic to any genus $n$ handlebody is given in \cite{Prob19}. In \cite{BM20}, the authors use a computer search to generate an 8-index 3D rep-tile  homeomorphic to a solid torus. This was the unique 8-index 3D rep-tile not homeomorphic to the 3-ball among the one million examples they generated.  

In this paper we classify all 3D rep-tiles up to homeomorphism. In particular, we give a method of constructing a 3D rep-tile of any suitable homeomorphism type.

\begin{theorem}\label{main}
A set $X$ in $\mathbb{R}^3$ is a $3D$ rep-tile if and only if $X$ is homeomorphic to the exterior of a finite connected graph in $S^3$.
\end{theorem}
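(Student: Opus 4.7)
The theorem has two directions. For the \emph{necessity} direction (3D rep-tile $\Rightarrow$ graph exterior), I would invoke the standard 3-manifold fact that a compact connected PL 3-submanifold $X \subset S^3$ is the exterior of a finite connected graph in $S^3$ if and only if $S^3 \setminus \interior(X)$ is a (connected) handlebody. So the goal reduces to proving that the complement of a 3D rep-tile in $S^3$ is a handlebody. My strategy is to exploit the rep-tile iteration to rule out essential surfaces in the complement: writing $X = X_1 \cup \cdots \cup X_k$ with each $X_i$ similar to $X$, repeatedly subdividing produces tilings of $X$ by similar pieces at arbitrarily small scale. An essential sphere, disk, or torus in $S^3 \setminus \interior(X)$ would then have to interact with these copies across all scales in a tightly constrained way, and the essential surface machinery developed earlier in the paper (on disjoint essential surfaces and essential tori) should yield a contradiction. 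Combining irreducibility with the appropriate boundary-compressibility structure then gives the handlebody conclusion.

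For the \emph{sufficiency} direction (graph exterior $\Rightarrow$ 3D rep-tile), the plan is to construct an explicit rep-tile for each homeomorphism type of graph exterior. Given a finite connected graph $G \subset S^3$, I would first PL-isotope $G$ into a cubical position, realizing $E(G)$ as a large cube with a cubical tubular neighborhood of $G$ excised. The next step is to arrange this cubical graph exterior so that scaling by $1/2$ and taking eight congruent copies reproduces the same shape. This requires encoding the excised tube pattern self-similarly, so that each smaller copy contains a scaled replica of the graph exterior, and the eight smaller copies together assemble into a full-scale version of $E(G)$ up to homeomorphism.

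The principal obstacle is the sufficiency construction. Graph exteriors can have arbitrary topology (any boundary genus, any knotting or linking pattern of the graph in $S^3$), yet the rep-tile condition is extremely rigid. To reconcile this, the construction likely needs a flexible cubical template together with auxiliary buffer structure around the excised tube so that the self-similar subdivision always produces the correct graph topology at smaller scales. I expect the key idea to be a building block that can locally realize any graph topology while globally admitting a self-similar decomposition into similar pieces, with the verification that the construction closes up on itself carried out by an explicit matching of boundary patterns at each level of the scaling hierarchy.
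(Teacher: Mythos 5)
Your necessity direction both overshoots and undershoots. The theorem only asks that $X$ be \emph{homeomorphic} to a graph exterior, so you do not need the complement of the given embedding to be a handlebody; that stronger statement is not what the paper proves, and your route to it is not an argument. The "up to reimbedding, the complement is a union of handlebodies" part is handed to you by Fox's reimbedding theorem (Theorem \ref{Fox}), which your sketch never invokes. What Fox does \emph{not} give, and what is the actual crux, is connectivity: to land on the exterior of a \emph{connected} graph you must show $\partial X$ is connected (otherwise $X$ could a priori be something like a manifold with several boundary components, which is never a connected-graph exterior). Your proposal never addresses this; instead it appeals to "essential surface machinery developed earlier in the paper," which does not exist here (the unused theorem names in the preamble are leftovers), and "an essential sphere, disk, or torus ... should yield a contradiction" is a hope, not a proof -- it is not even clear how surfaces in $S^3\setminus \interior(X)$ would interact with the self-similar subdivision, which takes place on the other side of $\partial X$. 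The paper's argument (Theorem \ref{boundaryconnected}) is elementary and entirely different: among all boundary components of all pieces $X_i$, choose $F$ bounding the region $R_F$ of least volume; if $\interior(R_F)$ met $X$, some piece inside $R_F$ would have a boundary component bounding a strictly smaller region, so $F$ must be a boundary component of $X$ itself; but similarity forces the least volume bounded by a component of $\partial X$ to be $kv$, so $kv=v$ and $k=1$, a contradiction. Nothing in your proposal substitutes for this step.

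For sufficiency you correctly identify the obstacle but do not overcome it: "I expect the key idea to be a building block ..." is a plan, not a construction, and the specific scheme you describe -- each of eight half-scale copies containing a scaled replica of $E(G)$ -- is exactly the self-referential requirement the paper's construction avoids. The paper (Theorem \ref{forward}) first contracts edges so the graph has a single vertex, so $E(\Gamma)$ becomes the exterior of $n$ arcs properly embedded in a cube running from the bottom face to the top face; a sufficiently fine cubical subdivision turns the arc neighborhood into a union of small cubes $A$, so $C\setminus i(A)$ is a polycube homeomorphic to $E(\Gamma)$. The key trick, generalizing Adams, is then to append three more unit cubes and the $\pi$-rotated tube $r(A)$ -- a boundary connected sum with $3$-balls, hence no change of homeomorphism type -- so that $X\cup r(X)$ is precisely the cube $[-1,1]\times[-1,1]\times[0,2]$: the tube excised from one copy is filled by the rotated copy's tube. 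Since two congruent copies of $X$ tile a cube and $X$ is itself a union of $4m^3$ lattice cubes, each such cube subdivides into two shrunken copies of $X$, exhibiting $X$ as an $8m^3$-index rep-tile. Without some such "finitely many congruent copies tile a cube" mechanism, your scaling-by-$\frac{1}{2}$ scheme has no way to close up, so as written this direction is a genuine gap.
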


The concept of self-affine tile is related to that of rep-tile. A self-affine tile is defined by a finite collection of contractions (not necessarily similarities) that are affine translates of a single linear contraction. In \cite{CT16}, the authors generate the first examples of 3-dimensional self-affine tiles which are topological 3-manifolds with boundary. In particular, they show that every handlebody is homeomorphic to a 3-dimensional self-affine tile. However, the embeddings they generate are not piece-wise linear and not 3D rep-tiles. They conjecture that all 3-dimensional self-affine tiles which are topological 3-manifolds are homeomorphic to handlebodies. We wonder if the examples of rep-tiles which are not homeomorphic to handlebodies generated in the current paper can be modified to provide counterexamples to the conjecture posed in \cite{CT16}.

Rep-tiles are a model for topological self-replication in that a rep-tile is an object that can be decomposed into finitely many copies each of which is homeomorphic to the original. Previous models of self-replication in low-dimensional topology have focused on idempotents (i.e. morphisms with the property that $f\circ f=f$) in the appropriate topological category. Such idempotents are manifolds that can be decomposed along embedded surfaces into two copies of themselves. In particular, idempotents have been classified in the Temperley-Lieb category \cite{Abramsky07}, the tangle category \cite{BS19}, and the (2+1)-cobordism category \cite{BL21}. Models based on idempotents inherently model 1-to-2 self-replication. In contrast, $3D$ rep-tiles model 1-to-many self-replication. See Figure \ref{IdempVSRep}.

\begin{figure}[h!]
\begin{picture}(264,267)
\put(1,1){\includegraphics[scale=.5]{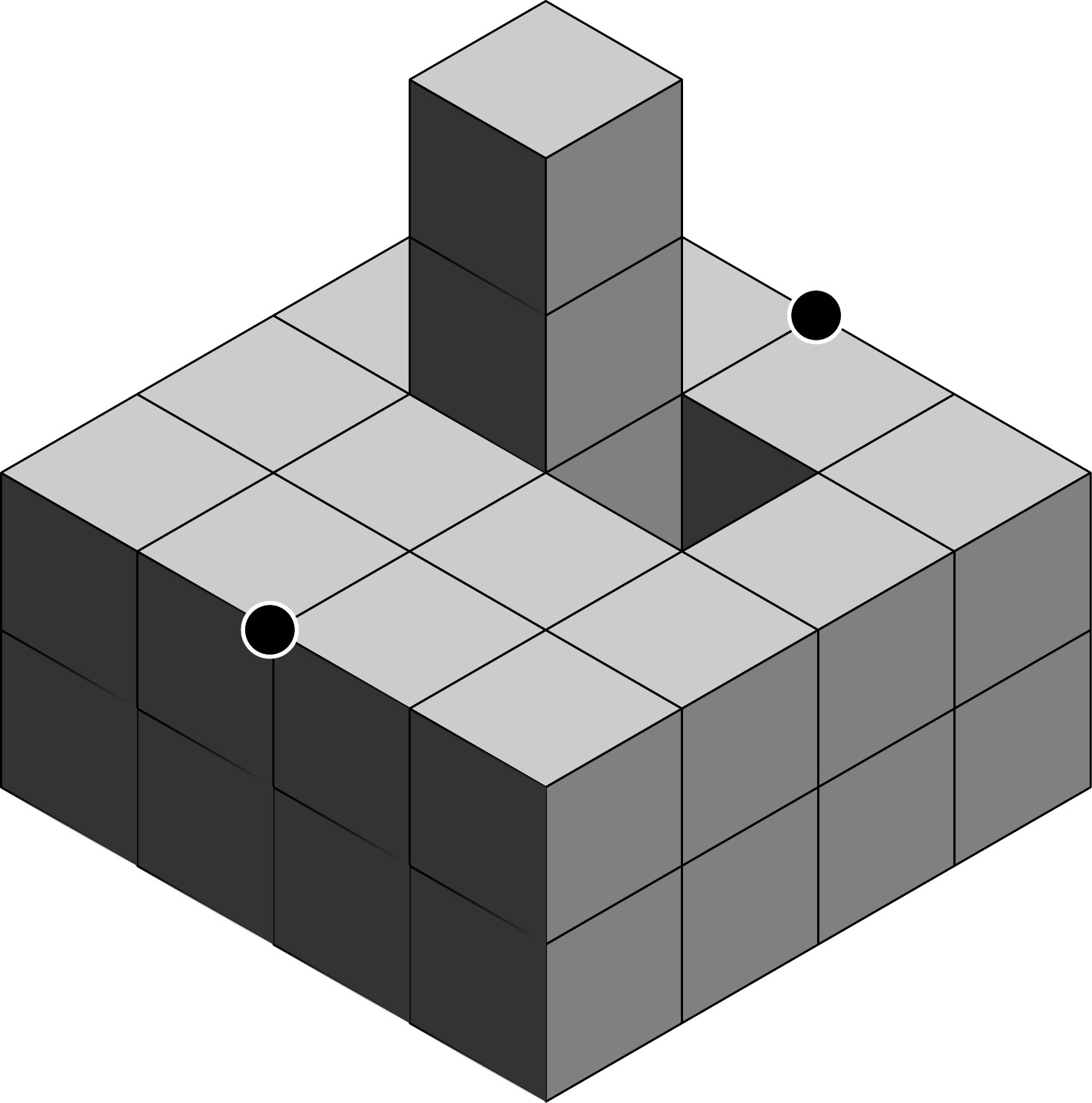}}
\put(49,100){$A$}
\put(157,162){$B$}
\end{picture}
\caption{An example of a polycube and a 3D rep-tile homeomorphic to $D^2\times S^1$. It is constructed by removing the two cubes corresponding to $[-1,0]\times[0,1]\times[0,2]$ from $[-2,2]\times[-2,2]\times[0,2]$ and then adding two cubes corresponding to $[-1,0]\times[-1,0]\times[2,4]$.}
\label{G1Examp}
\end{figure}

\begin{figure}[h!]
\begin{picture}(277,198)
\put(1,1){\includegraphics[scale=.35]{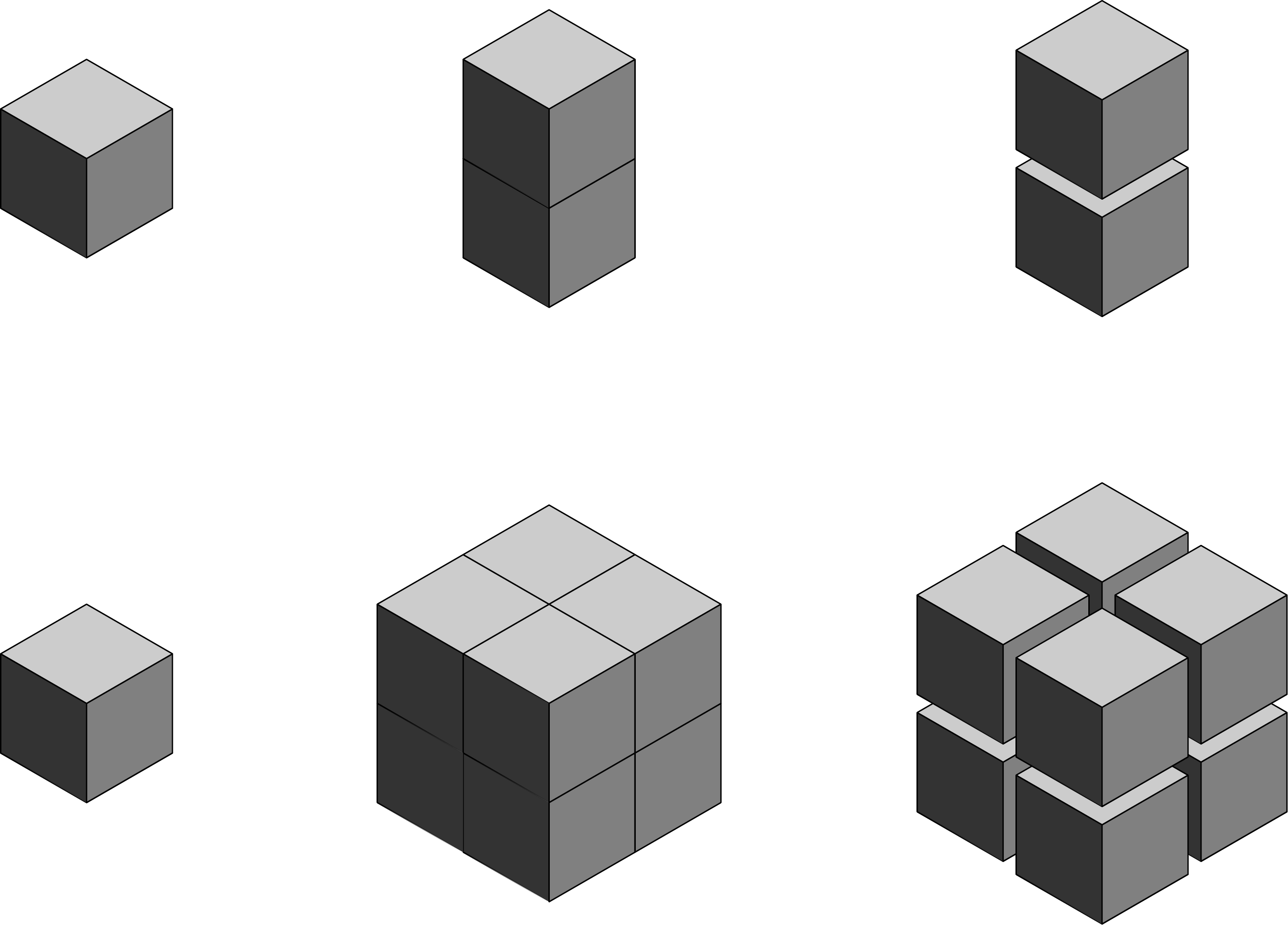}}
\put(55,45){$\cong$}
\put(65,163){$\cong$}
\put(169,45){$\rightarrow$}
\put(175,163){$\rightarrow$}
\end{picture}
\caption{Above: A cube as an idempotent in the (2+1)-cobordism category modeling 1-to-2 self-replication. Below: A cube as a  $3$-dimensional $8$-index rep-tile modeling 1-to-8 self-replication.}
\label{IdempVSRep}
\end{figure}

In Section \ref{defs} we introduce useful 3-manifold and lattice terminology. In Section \ref{Sec:backward} we prove the backward dirrection of Theorem \ref{main}. In Section \ref{Sec:forward} we prove the forward dirrection of Theorem \ref{main}.

\section{Preliminaries}\label{defs}



We begin with a discussion of useful classes of manifolds and sub-manifolds. A \emph{handlebody} is a compact orientable $3$-manifold with boundary homeomorphic to the closed regular neighborhood of a connected finite graph embedded in a 3-manifold. The genus of the boundary of a handlebody uniquely determines the homeomorphism class of the handlebody. We say a handlebody is genus $g$ if its boundary is homeomorphic to a genus $g$ surface.

\begin{definition}
Given manifold $M$ with submanifold $N$, the \emph{exterior} of $N$ in $M$ is $E_M (N):=M \setminus \eta(N)$ where $\eta(N)$ is an open tubular neighborhood of $N$ in $M$. When $M$ is understood to be $S^3$, we will write the exterior as $E(N)$. 
\end{definition}

Here we introduce terminology related to cubic lattices in $\mathbb{R}^3$ that will be relevant to our construction. We will denote the standard embedding of the unit cubic lattice in $\mathbb{R}^3$ as $$\mathcal{Z}^3= \bigcup_{(i,j)\in \mathbb{Z}^2} (\{i\}\times \{j\} \times \mathbb{R}) \cup (\{i\}\times \mathbb{R} \times \{j\}) \cup (\mathbb{R} \times  \{i\} \times \{j\}).$$ Suppose $\lambda>0$ and let $f_{\lambda}:\mathbb{R}^3\rightarrow \mathbb{R}^3$ denote the scaling function given by $f(x)=\lambda x$. Let $\mathcal{Z}_{\lambda}^3=f(\mathcal{Z}^3)$. 


Sometimes it will be useful to refer to the 3-complex structure induced by each of the cubic lattices mentioned above on $\mathbb{R}^3$ (i.e. the decomposition $\mathbb{R}^3$ into the union of vertices, edges, squares and cubes). Given the lattice $\mathcal{Z}_{\lambda}^3$, the corresponding 3-complex structure on $\mathbb{R}^3$ will be denoted $\mathcal{C}(\mathcal{Z}_{\lambda}^3)$. Ultimately, our construction of 3D rep-tiles will be based on polycubes. A \emph{polycube} is a subset of $\mathbb{R}^3$ that is similar to a finite union of $3$-cells in $\mathcal{C}(\mathcal{Z}^3)$. See Figure \ref{G1Examp} for an example of a polycube consisting of 32 cubes and homeomorphic to $D^2\times S^1$. 

\section{Graph exteriors as 3D rep-tiles}\label{Sec:backward}

In \cite{Adams97} Adams shows that for any 3-dimensional compact submanifold $M$ of $\mathbb{R}^3$ with a single boundary component, a $3$-ball can be decomposed into four congruent tiles, each homeomorphic to $M$. The following theorem generalizes Adams’ results and then applies the generalization to 3D rep-tiles. In particular, the following proof implies that for any 3-dimensional compact submanifold $M$ of $\mathbb{R}^3$ with a single boundary component, a cube can be decomposed into two congruent tiles such that each is a polycube and each is homeomorphic to $M$. For example, two copies of the polycube in Figure \ref{G1Examp} tile the $4\times 4\times 4$ cube. We can see this by rotating the polycube in Figure \ref{G1Examp} by $\pi$ about the line that passes through the points $A$ and $B$. This observation implies that this polycube is a rep-tile since each of its 32 cubes can be decomposed into a total of 64 polycubes which are all congruent to each other and similar to the original.

\begin{theorem}\label{forward}
If $M$ is the exterior of a connected graph in $S^3$, then $M$ is homeomorphic to a 3D rep-tile.
\end{theorem}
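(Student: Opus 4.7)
The plan is to realize $M$ as a polycube forming one half of a cube under a $\pi$-rotation, and then to use the symmetry at finer scales to deduce the rep-tile structure. As a first step, augment $G$ with a pendant edge $e$ at some vertex $v_0\in G$ whose other endpoint is a new vertex $v_\infty$. Because $e$ is a tree-edge, the regular neighborhood $\eta(G\cup e)$ is a handlebody of the same genus as $\eta(G)$, so $E(G\cup e)\cong E(G)\cong M$. PL-embed $G\cup e$ along a cubic lattice $\mathcal{Z}_\lambda^3$ with $v_\infty$ placed outside a large cube $C$, and with the lattice path of $e$ meeting $\partial C$ transversally in a single unit square. Taking $N\subset C$ to be the polycube regular neighborhood of $(G\cup e)\cap C$ in the lattice, the polycube $C\setminus\interior(N)$ is homeomorphic to $M$.

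The main technical step is to refine this so that the resulting polycube is $\pi$-rotationally symmetric. Concretely, the goal is to produce a polycube $P\cong M$ inside a cube $C$ and a $\pi$-rotation $\rho$ of $\R^3$ preserving $C$ with
\[ P\cup \rho(P) = C \quad\text{and}\quad \interior(P)\cap \interior(\rho(P)) = \nil. \]
Following the template illustrated in Figure~\ref{G1Examp}, let $\Sigma$ be a mid-plane of $C$ splitting it into halves $C^-$ and $C^+$, and let $\rho$ be the $\pi$-rotation about a horizontal line $\ell\subset\Sigma$ that preserves $C$ and swaps $C^\pm$. The strategy is to realize $M$ as $C^-\setminus \interior(B)$ for a polycube $B\subset C^-$ arranged so that $B\cap\Sigma$ is a single unit square, and then to set $P := (C^-\setminus \interior(B))\cup \rho(B)$. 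A direct computation using $\rho^2=\mathrm{id}$ gives $\rho(P) = (C^+\setminus \interior(\rho(B)))\cup B$, hence the tiling $P\cup \rho(P) = C$ with disjoint interiors. The identification $P\cong M$ holds whenever the attached piece $\rho(B)\subset C^+$ glues onto $C^-\setminus \interior(B)$ along the single disk $\rho(B)\cap \Sigma$ without changing the homeomorphism type --- for instance, when $B$ is a polycube ball, as in Figure~\ref{G1Examp} with $M\cong D^2\times S^1$. For an arbitrary connected graph $G$ one must choose the lattice spine of $B$ and the way it meets $\partial C^-$ so that $C^-\setminus \interior(B)$ recovers the full exterior $E(G)$ while preserving the single-square $\Sigma$-contact.

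Once this symmetric decomposition is in hand, the rep-tile structure follows by a direct scaling argument. Suppose $P$ consists of $n$ unit cubes and $P\cup \rho(P)$ is the $s\times s\times s$ cube $C$, so $s^3 = 2n$. Set $P' := f_{1/s}(P)$, a scaled copy of $P$ consisting of $n$ cubes of side length $1/s$. Rescaling the tiling identity by $1/s$ shows that $P'$ and its $\pi$-rotated image together tile a single unit cube. Hence each of the $n$ unit cubes of $P$ decomposes into two congruent copies of $P'$, producing a decomposition of $P$ into $2n$ mutually congruent polycubes, all similar to $P$ with ratio $1/s$. Therefore $P$ is a $2n$-index 3D rep-tile homeomorphic to $M$.

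The main obstacle lies in the symmetric polycube construction: simultaneously arranging the geometric tiling $P\cup \rho(P) = C$ and the topological identification $P\cong M$. The $\rho$-symmetry ties $\rho(B)$ rigidly to $B$, so the choice of $B$ must simultaneously realize $M$ as $C^-\setminus \interior(B)$ and control how $\rho(B)$ attaches along $\Sigma$. The Figure~\ref{G1Examp} construction handles the solid-torus prototype; extending this template to arbitrary connected graph exteriors --- including knot complements and other non-handlebody manifolds --- is the principal technical content of the theorem.
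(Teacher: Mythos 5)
Your overall template --- build a polycube homeomorphic to $M$ that, together with a $\pi$-rotated copy, tiles a cube, then rescale to get the rep-tile index --- is exactly the paper's strategy, and your final scaling argument is correct. But the heart of the proof is the step you explicitly leave open (``the principal technical content of the theorem''), and the specific framework you set up for it does not work as stated. If you take $B\subset C^-$ to be a lattice neighborhood of the whole graph $G$ plus one arc reaching $\Sigma$, with $B\cap\Sigma$ a single unit square, then $\rho(B)$ is a genus-$g$ handlebody, not a ball; attaching it to $C^-\setminus \interior(B)\cong M$ along the single disk $\rho(B)\cap\Sigma$ produces the boundary connected sum of $M$ with a genus-$g$ handlebody, which is not homeomorphic to $M$ unless $g=0$ (e.g.\ for a knot exterior the result has genus-$2$ boundary). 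So the ``single-square $\Sigma$-contact'' constraint is precisely what prevents your construction from covering anything beyond the ball and solid-torus prototypes, and no choice of spine for $B$ rescues it while $B$ remains connected and meets $\Sigma$ only once.

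The missing idea, which is how the paper resolves this, is to change what gets removed so that the rotated piece is a union of \emph{balls}. Contract a spanning tree of $\Gamma$ to get a one-vertex graph $\Gamma_1$ with $E(\Gamma_1)\cong E(\Gamma)$; remove a ball neighborhood $V$ of the vertex, so that $M$ becomes the exterior of $n$ properly embedded arcs $\alpha_1,\dots,\alpha_n$ in the complementary ball, identified with the unit cube $C$ so that each arc runs from the bottom face to the top face. Now the removed set $A$ (the lattice neighborhood of the arcs) is a disjoint union of $n$ balls, each meeting the plane of the rotation axis in a single disk. Padding $C$ out to a $2\times 2\times 1$ slab and setting $X=(\mathrm{slab}\setminus \interior(A))\cup r(A)$, each component of $r(A)$ is a $3$-ball glued along one disk, i.e.\ a boundary connected sum with balls, so $X\cong M$; and $X\cup r(X)$ is a cube by the same cancellation computation you already carried out. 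In short: your rotation-and-rescale mechanism is right, but you need the reduction of $E(\Gamma)$ to an exterior of arcs in a cube (so that the rotated attachment is trivial), rather than trying to keep the whole graph as the spine of a single piece meeting $\Sigma$ in one square.
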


\begin{proof}
Let $M=E(\Gamma)$ for some connected graph $\Gamma$ embedded in $S^3$. We begin by showing that $E(\Gamma)$ is homeomorphic to a \emph{cube-with-holes} (i.e. the exterior of a collection of properly embedded arcs in a cube). Preform edge contractions on $\Gamma$ to produce an embedded graph $\Gamma_1$ with a single vertex and $n$ edges. Note that $E(\Gamma)$ is homeomorphic to $E(\Gamma_1)$. See Figure 3 for an example of an embedded graph in $S^3$ with a single vertex. Let $V$ be a $3$-ball corresponding to a small closed regular neighborhood of the vertex of $\Gamma_1$. Let $B$ be the 3-ball $S^3\setminus int(V)$. The 3-ball $B$ meets in $\Gamma_1$ in a collection of $n$ properly embedded arcs $\alpha_1, \alpha_2, ..., \alpha_n$. Note that $E_B(\cup_{i=1}^n \alpha_i)$ is homeomorphic to $E(\Gamma_1)$. Label the boundary of each arc by $\partial \alpha_i = \{a_i, b_i\}$. Identify the three ball $B$ with the unit cube $C= [0,1]\times [0,1] \times [0,1]$ in $\mathbb{R}^3$ via a homeomorphism that takes the set $\{a_1, a_2,...,a_n\}$ to the interior of the square $[0,1]\times [0,1] \times \{0\}$ and takes the set $\{b_1, b_2,...,b_n\}$ to the interior of the square $[0,1]\times [0,1] \times \{1\}$. See Figure 4. Through an abuse of notation, we continue to refer to the image of the arcs properly embedded in $C$ as $\alpha_1, \alpha_2, ..., \alpha_n$. Thus, $E_C(\cup_{i=1}^n \alpha_i) \cong E_B(\cup_{i=1}^n \alpha_i)$. 

\begin{figure}[h!]\label{Fig:Graph}
\includegraphics[scale=1]{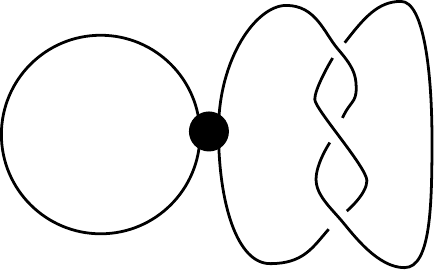}
\caption{A graph with a single vertex embedded in $S^3$.}
\end{figure}

\begin{figure}[h!]\label{CubeWithArcs}
\includegraphics[scale=.15]{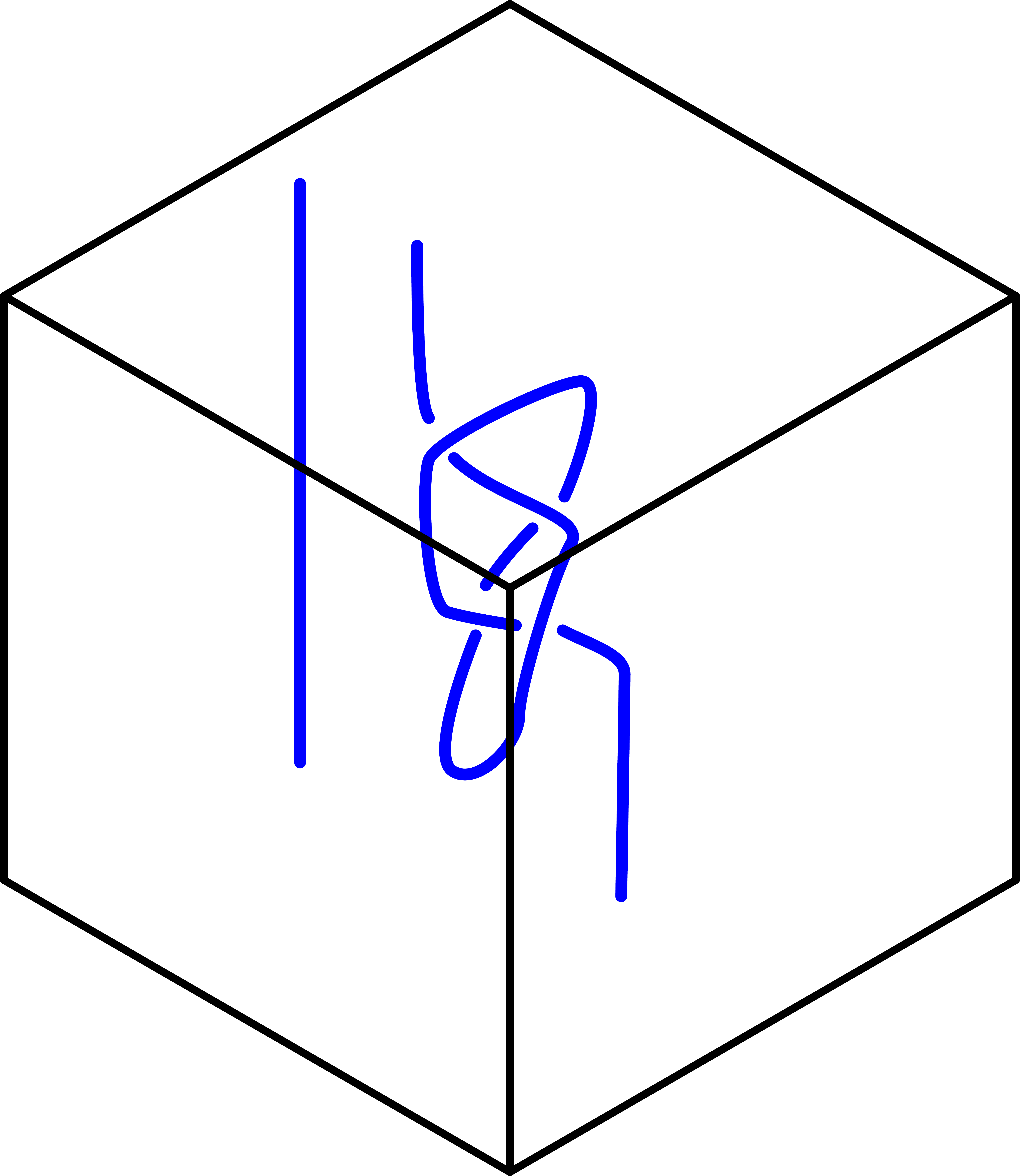}
\caption{Arcs $\alpha_1$ and $\alpha_2$ properly embedded in the cube $C$ such that $E_{C}(\alpha_1\cup \alpha_2)$ is homeomorphic to the exterior of the graph in Figure 3.}
\end{figure}

\begin{figure}[h!]\label{CubeWithHoles}
\includegraphics[scale=.15]{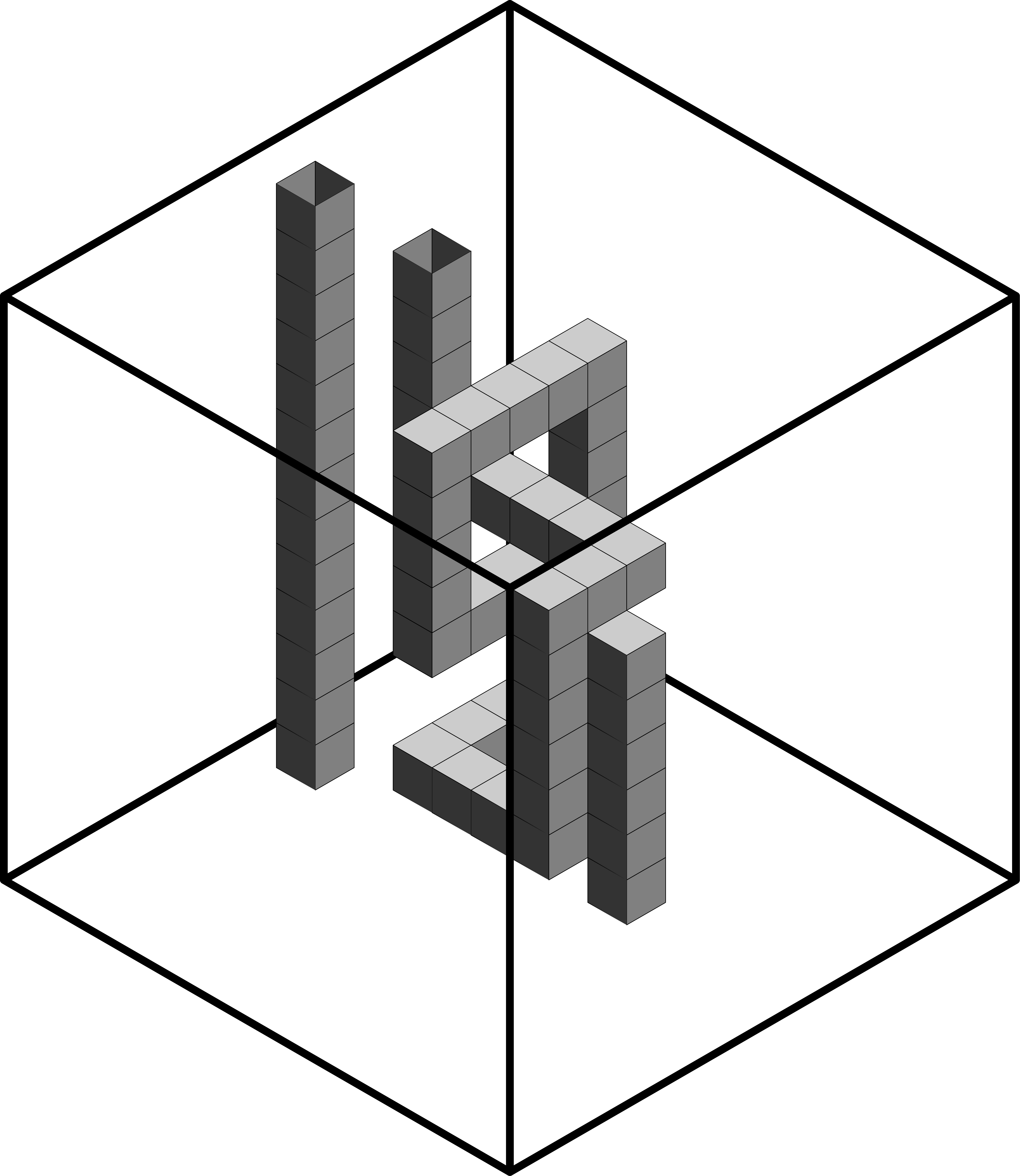}
\caption{The exterior of the graph in Figure 3 represented as a polycube.}
\end{figure}

Next, we show the cube-with-holes we just generated is homeomorphic to a polycube. Note that for every positive integer $m$, $\mathcal{C}(\mathcal{Z}_{\frac{1}{m}}^3)$ induces a cell structure on $C$. Moreover, by choosing $m$ sufficiently large and possibly preforming an small proper isotopy of  $\cup_{i=1}^n \alpha_i$, we can assume $\cup_{i=1}^n \alpha_i$ is disjoint from the $\mathcal{Z}_{\frac{1}{m}}^3$ and, if $A$ is the union of all 3-cells in $C$ with non-trivial intersection with $\cup_{i=1}^n \alpha_i$, then $A$ is isotopic to a closed tubular neighborhood of $\cup_{i=1}^n \alpha_i$ in $C$. Let $i(A)$ denote the interior of all cells in $C$ with non trivial intersection with $\cup_{i=1}^n \alpha_i$. Then $i(A)$ is isotopic to an open tubular neighborhood of $\cup_{i=1}^n \alpha_i$ in $C$ and $C\setminus i(A)$ is a polycube homeomorphic to $E_C(\cup_{i=1}^n \alpha_i)\cong E(\Gamma)$. See Figure 5. 

We now modify $C\setminus i(A)$ to generate a polycube homeomorphic to $C\setminus i(A)$ which tiles the cube using two congruent copies. Let $C_1=[0,1]\times [-1,0] \times [0,1]$, $C_2=[-1,0]\times [-1,0] \times [0,1]$ and $C_3=[-1,0]\times [0,1] \times [0,1]$. Note that $(C\setminus i(A) )\cup C_1 \cup C_2 \cup C_3$ is again a polycube homeomorphic to $E(\Gamma)$. Let $r:\mathbb{R}^3\rightarrow \mathbb{R}^3$ be a rotation of $\pi$ about the line parameterizd by $<t,0,1>$. Let $X= (C\setminus i(A) )\cup (C_1\cup C_2 \cup C_3)\cup r(A)$. The polycube $X$ is homeomorphic to $(C\setminus i(A) )\cup C_1 \cup C_2 \cup C_3$ since it is the boundary connected sum of $(C\setminus i(A) )\cup C_1 \cup C_2 \cup C_3$ with a collection of $n$ 3-balls. See Figure 6. Hence, $X\cong E(\Gamma)$. By construction, $X\cup r(X)= [-1,1]\times [-1,1] \times [0,2]$ and $int(r(X))\cap int(X)=\emptyset$. Hence, two congruent copies of $X$ with disjoint interiors tile the cube $[-1,1]\times [-1,1] \times [0,2]$. Since $X$ is a polycube consisting of $4m^3$ cubes of side length $\frac{1}{m}$, then $X$ is an $3$-dimensional $8m^3$-index  rep-tile.

\end{proof}

\begin{figure}[h!]\label{ExRep1}
\includegraphics[scale=.08]{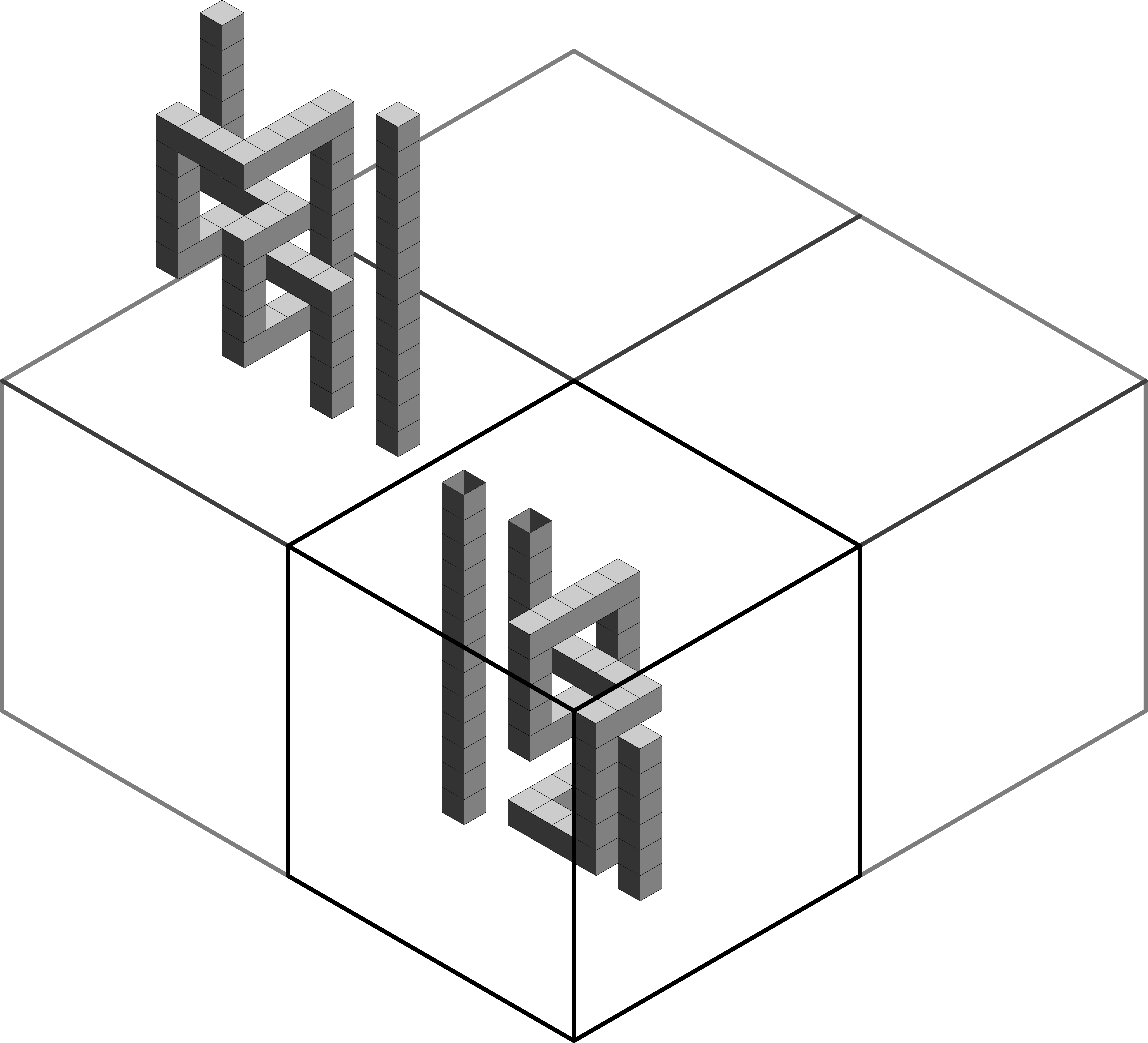}
\caption{A 3D rep-tile homeomorphic to the exterior of the graph in Figure 3.}
\end{figure}

\section{Classification}\label{Sec:forward}

In order to show that every 3D rep-tile is homeomorphic to the exterior of a connected graph, we will require the following reimbedding theorem due to Fox.

\begin{theorem}\label{Fox}\cite{Fox48}
Every compact connected 3-dimensional sub-manifold $X$ of $S^3$ can be reimbedded in $S^3$ so that the exterior of the image of $X$ is a union of handlebodies, i.e. regular neighborhoods of embedded graphs.
\end{theorem}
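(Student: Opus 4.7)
The plan is to work one complementary region at a time. Let $Y_1,\ldots,Y_k$ be the closures of the components of $S^3\setminus\interior(X)$, and write $\Sigma_j=\partial Y_j\subset\partial X$. I would replace each $Y_j$ by a union of handlebodies glued to $X$ along $\Sigma_j$ in a way that preserves the ambient $S^3$.

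The first ingredient is the half-lives-half-dies theorem: because $Y_j$ is a compact orientable 3-manifold with boundary, the kernel $L_j$ of the map $H_1(\Sigma_j;\Q)\to H_1(Y_j;\Q)$ induced by inclusion is a Lagrangian subspace for the intersection form on $H_1(\Sigma_j;\Q)$, hence has dimension equal to the total genus $g_j$ of $\Sigma_j$. I would then represent $L_j$ by $g_j$ disjoint simple closed curves $\gamma_{j,1},\ldots,\gamma_{j,g_j}$ on $\Sigma_j$ whose union cuts $\Sigma_j$ into a disjoint union of punctured spheres; such a cut system always exists for a Lagrangian, and it uniquely determines, up to homeomorphism rel $\partial$, a collection of handlebodies $\mathcal H_j$ with $\partial\mathcal H_j=\Sigma_j$ and the $\gamma_{j,i}$ as meridians. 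A Mayer--Vietoris calculation then shows that replacing each $Y_j$ by $\mathcal H_j$, with the gluing to $X$ unchanged on $\Sigma_j$, produces a closed $3$-manifold $M$ with $H_*(M)\cong H_*(S^3)$, precisely because the $\gamma_{j,i}$ were chosen to span the same Lagrangian $L_j$ that is killed by $Y_j$.

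The main obstacle is verifying that $M$ is actually homeomorphic to $S^3$, not merely a homology sphere. My plan is to realize the passage from $Y_j$ to $\mathcal H_j$ as a finite sequence of ambient surgery moves inside the original $S^3$, each of which is a 1-handle slide or tubing operation that leaves the ambient manifold unchanged while modifying $Y_j$ toward a handlebody. Iteratively applying the loop theorem and Dehn's lemma---either in $Y_j$ to compress $\Sigma_j$ when a curve in $L_j$ bounds an embedded disk there, or in the complement of $Y_j$ in $S^3$ to find embedded disks bounded by dual curves, which are then tubed onto $Y_j$ as 1-handles from the outside---one reduces $Y_j$ through a sequence of 3-manifolds that always sit inside $S^3$, whose boundary remains identified with $\Sigma_j$ throughout. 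The process terminates at the handlebodies $\mathcal H_j$, the ambient manifold is $S^3$ by construction, and one reads off the desired new embedding of $X$ whose exterior is the disjoint union of the $\mathcal H_j$.
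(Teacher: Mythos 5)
The decisive step of your argument is the final paragraph, and that is exactly where it breaks down. The moves you propose are not available and, when they are, they do not do what you need. The loop theorem/Dehn's lemma produces a compressing disk only when $\pi_1(\Sigma_j)\to\pi_1(Y_j)$ fails to be injective; a curve representing a class of $L_j$ is only guaranteed to bound a surface in $Y_j$, not a disk, and $Y_j$ can be boundary-irreducible while being far from a handlebody. The very first nontrivial instance of Fox's theorem already defeats this: let $X$ be a knotted solid torus, so $Y=E(K)$ is the exterior of a nontrivial knot; here $L$ is spanned by the Seifert longitude, which bounds no disk in $Y$, and $Y$ admits no compressing disk at all. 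Your fallback move---finding a disk in $S^3\setminus Y_j$ bounded by a dual curve and tubing it onto $Y_j$---takes place inside $X$: it attaches a $2$-handle to $Y_j$ by carving that handle out of $X$. In the example it cuts the solid torus along its meridian disk and turns $X$ into a ball. So ambient moves that keep $S^3$ fixed necessarily trade material between $X$ and $Y_j$; the assertion that ``the boundary remains identified with $\Sigma_j$ throughout'' is false (every compression or tube changes the genus of the surface), and at the end of the process you have embedded some \emph{other} manifold with handlebody exterior rather than re-embedded $X$. This is precisely the content of Fox's theorem: the homeomorphism type of $X$ must be preserved while its embedding changes, which forces a genuine cut-and-reglue re-embedding argument (as in Fox's original paper, which the present article invokes by citation rather than reproving), not surgery on the complementary regions inside the fixed $S^3$.

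The first half of your proposal is fine as far as it goes (half-lives-half-dies, realizing the rational Lagrangian by a cut system, and a Mayer--Vietoris check), but, as you yourself note, it only yields a homology sphere, and nothing in the construction constrains the result beyond homology: the cut system realizing $L_j$ is highly non-unique, different choices change the glued manifold $M$ while leaving its homology untouched, and nontrivial homology spheres exist, so no argument of this purely homological type can conclude $M\cong S^3$. Since the second half does not repair this, the proposal as written does not prove the theorem; the missing idea is a mechanism for changing the embedding of $X$ (not $X$ itself, and not merely the abstract gluing data) so that the complementary regions become handlebodies while the ambient manifold is certified to remain $S^3$.
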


The idea behind the following proof is to show that if a 3D rep-tile $X= \bigcup_{i=1}^{k} X_i$ has two boundary components, then a ``minimal'' boundary component over all $X_i$ must also be a boundary component for $X$. This leads to a contradiction.

\begin{theorem}\label{boundaryconnected}
If $X$ is a 3D rep-tile, then $\partial X$ is connected.
\end{theorem}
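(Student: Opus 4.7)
The plan is to argue by contradiction: assuming $\partial X$ has $m \geq 2$ components, I will derive a contradiction from the observation that the smallest bounded complementary region across all tiles $X_i$ would have to be a complementary region of $X$ as well, which conflicts with the volume scaling forced by the rep-tile relation.

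Two preliminaries set things up. First, for any compact connected $3$-submanifold $Y$ of $\R^3 \subset S^3$, Mayer--Vietoris applied to $S^3 = Y \cup \overline{S^3 \setminus Y}$ yields that $\R^3 \setminus \operatorname{int}(Y)$ has exactly as many components as $\partial Y$ --- one unbounded and the rest bounded --- each with a single boundary component of $Y$ as its frontier. Applied to $X$ and each $X_i$, this gives every tile at least $m-1 \geq 1$ bounded complementary regions in $\R^3$. Second, since $X = \bigcup_{i=1}^k X_i$ is a decomposition into congruent similar copies, the similarity ratio satisfies $\lambda^3 = 1/k$; writing $V$ for the minimum volume of a bounded complementary region of a tile (the same for every tile by congruence) and $V_X$ for the analogous minimum for $X$, similarity gives $V_X = kV$.

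The crux is the claim that if $R^*$ is a bounded complementary region of some $X_i$ with $\operatorname{vol}(R^*) = V$ and $S^* = \partial R^*$, then $S^* \subset \partial X$. Suppose otherwise; then some point of $S^*$ lies in $\operatorname{int}(X)$, and on the $R^*$-side of $S^*$ we locate an interior point of some other tile $X_j$. The key geometric step is to rule out $\operatorname{int}(X_j) \cap \partial X_i \neq \emptyset$: such a point would carry a small $3$-ball neighborhood inside $\operatorname{int}(X_j)$ crossing $\partial X_i$, which would then meet $\operatorname{int}(X_i)$ in an open set, contradicting the disjoint-interiors property of the tiling. Hence $\operatorname{int}(X_j) \cap X_i = \emptyset$, and by connectedness $\operatorname{int}(X_j) \subset R^*$, so $X_j \subset \overline{R^*}$. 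Since $X_j \cong X$, it has a bounded complementary region $R^{(j)}$; because $\R^3 \setminus \overline{R^*}$ is a connected unbounded subset of $\R^3 \setminus X_j$, the region $R^{(j)}$ must sit inside $\overline{R^*} \setminus X_j$, giving $\operatorname{vol}(R^{(j)}) \leq V - \operatorname{vol}(X_j) < V$, contradicting the minimality of $V$.

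Granted the claim, $S^*$ is a connected component of $\partial X$, and a short nesting argument (using that the bounded side of $S^*$ in $\R^3$ is uniquely determined) shows the corresponding bounded complementary region $R$ of $X$ is contained in $R^*$; thus $V_X \leq \operatorname{vol}(R) \leq V$, contradicting $V_X = kV > V$. The main obstacle I anticipate is the trapping argument forcing $X_j \subset \overline{R^*}$; the delicate point is the $3$-ball neighborhood step ruling out $\operatorname{int}(X_j) \cap \partial X_i \neq \emptyset$, which is what turns the combinatorial disjoint-interiors hypothesis into the geometric conclusion that $X_j$ cannot straddle $\partial X_i$.
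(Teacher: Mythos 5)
Your proof is correct and takes essentially the same approach as the paper's: both select a minimal-volume bounded region cut off by a boundary component of some tile, show minimality forbids any tile from being trapped inside it (so that surface must be a component of $\partial X$), and then contradict the similarity scaling, which forces the minimal such volume for $X$ to be $k$ times that of a tile. Your write-up differs only in phrasing the regions as bounded complementary components of the tiles and in spelling out details (the disjoint-interiors local argument and the nesting of regions) that the paper leaves implicit.
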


\begin{proof}
Suppose $X$ is a 3D rep-tile such that $\partial X$ is not connected. Then $X= \bigcup_{i=1}^{k} X_i$ such that, $X_i$ is congruent to $X_j$ for any $i, j \in \{1,..., k\}$, $int(X_i) \cap int(X_j) = \emptyset$ if $i \neq j$, and $X_1$ is similar to $X$. Moreover, each $X_i$ has at least two connected boundary components.

Note that every closed connected surface in $\mathbb{R}^3$ separates. Hence, any such surface $G$ is the boundary of some finite volume region $R_G\subset \mathbb{R}^3$. Let $F$ be a connected boundary component of some $X_m$ such that $R_F$ has the least volume over all finite volume regions bounded by any connected boundary component of any $X_i$. 



Suppose the interior of $R_F$ has nontrivial intersection with $X$. Then there exists some $X_n$ embedded in $R_F$. Since $X_n$ has at least two boundary components and $R_F$ has one boundary component, then $X_n\neq R_F$. So, some boundary component of $X_n$ bounds a finite volume region which is a proper subset of $R_F$ and has volume strictly less than $R_F$. This is a contradiction to the minimality of the volume of $R_F$.  Thus, the interior of $R_F$ is disjoint from $X$. This implies that $F$ is a boundary component of $X$. 

However, if $X= \bigcup_{i=1}^{k} X_i$ is a rep-tile and $v$ is the volume of $R_F$, then the least volume of any finite volume region bounded by a connected boundary component of $X$ is $kv$. Since $F$ is a boundary component of $X$, then $kv=v$. Since $v>0$, then $k=1$, a contradiction. Hence, $\partial X$ must be connected.
\end{proof}

Theorems \ref{Fox} and \ref{boundaryconnected} immediately imply the following result.

\begin{theorem}\label{backward}
Every 3D rep-tile is homeomorphic to the exterior of a connected graph.
\end{theorem}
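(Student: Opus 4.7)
The plan is to combine Theorems \ref{Fox} and \ref{boundaryconnected} in the most direct way possible. Let $X$ be a 3D rep-tile, so $X$ is a compact connected piecewise-linearly embedded 3-submanifold of $\mathbb{R}^3 \subset S^3$. I would first invoke Theorem \ref{boundaryconnected} to conclude that $\partial X$ is a single connected closed surface. Next I would apply Fox's reimbedding theorem (Theorem \ref{Fox}) to replace $X$ by a homeomorphic copy $X' \subset S^3$ whose complementary exterior $E(X') = S^3 \setminus \eta(X')$ is a disjoint union of handlebodies, i.e.\ a regular neighborhood of a (possibly disconnected) finite graph $\Gamma$ in $S^3$. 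Since the homeomorphism type of $X$ is what we care about and reimbedding preserves it, it suffices to show that $\Gamma$ can be taken to be connected.

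The key observation, which is where a tiny amount of care is required, is that $\partial E(X')$ is homeomorphic to $\partial X'$, which in turn is homeomorphic to $\partial X$ and therefore connected. A disjoint union of handlebodies has a number of boundary components equal to the number of handlebody factors, because each handlebody is connected and has a connected boundary. Hence connectedness of $\partial E(X')$ forces $E(X')$ to consist of a single handlebody, namely a regular neighborhood of a connected finite graph $\Gamma_0 \subset S^3$. Therefore $X \cong X' = E_{S^3}(\Gamma_0)$, which is exactly the statement of the theorem.

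I expect there is essentially no obstacle beyond carefully stitching the two ingredients together: the nontrivial content has already been packaged into Theorems \ref{Fox} and \ref{boundaryconnected}. The only minor subtlety is recording why a disjoint union of $n$ handlebodies has $n$ boundary components (so that connectedness of the boundary forces $n=1$), and confirming that Fox's reimbedding does not alter the fact that $\partial X$ is connected. Both are immediate from the fact that handlebodies are connected compact 3-manifolds with connected boundary and from the fact that reimbedding is by definition a homeomorphism onto the image.
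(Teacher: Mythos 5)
Your argument is exactly the one the paper intends: it states that Theorems \ref{Fox} and \ref{boundaryconnected} ``immediately imply'' the result, and your proposal simply spells out the immediate deduction (connected boundary forces the Fox exterior to be a single handlebody, hence a regular neighborhood of a connected graph). The details you supply are correct, so this matches the paper's proof.
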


Theorem \ref{main} follows from Theorems \ref{forward} and \ref{backward}.

\vspace{.5cm}

\noindent \textbf{Acknowledgements.} All three authors were partially supported by NSF grant DMS-1916494. The first author was partially supported by NSF grant DMS-1821254. We would like to thank Florence Newberger for helpful conversations and Chaim Goodman-Strauss for helpful comments on an early draft.

\bibliographystyle{plain}
\bibliography{bib}

\end{document}